\def\({\left(}
\def\){\right)}
\def\Nx{\nabla_x}
\def\Om{\Omega}
\def\R {\mathbb{R}}
\newcommand{\be}{\begin{equation} }
\newcommand{\ee}{\end{equation} }
\def \rw {\rightarrow}
\def \p {\partial}
\def \and{\qquad\text{and}\qquad}
\def\Dx{\Delta_x}
\def\({\left(}
\def\){\right)}
\def\Nx{\nabla}
\def\al{\alpha}
\def\Om{\Omega}
\def \pt {\partial_{t}}
\def\R {\mathbb{R}}
\def \p {\partial}
\def \and{\qquad\text{and}\qquad}
\def\Dx{\Delta}
\newtheorem{proposition}{Proposition}[section]
\newtheorem{theorem}[proposition]{Theorem}
\newtheorem{lemma}[proposition]{Lemma}
\theoremstyle{definition}
\newtheorem{remark}[proposition]{Remark}
\newtheorem{example}[proposition]{Example}
\numberwithin{equation}{section}
\def\be{\begin{equation}}
\def\ee{\end{equation}}
\def\bp{\begin{proof}}
\def\ep{\end{proof}}
\def \no#1#2#3 {{\bf #1} (#3), #2.}
\def \eds#1#2#3 {#1, #2, #3.}
\title[ About Blow up of Solutions  to  Nonlinear  Wave Equations ]
{About  Blow up of Solutions With Arbitrary Positive Initial Energy to  Nonlinear  Wave Equations }
\author[]
{B. A. Bilgin and V. K. Kalantarov }
\address{( B. A. Bilgin) Department of Mathematics,
\newline\indent Ko{\c c} University, Rumelifeneri Yolu, Sariyer, Istanbul, Turkey
}
\address{(V.K.Kalantarov) Department of Mathematics,
\newline\indent Ko{\c c} University, Rumelifeneri Yolu, Sariyer, Istanbul, Turkey\\
\newline\indent Institute of Mathematics and Mechanics, Academy of Sciences of Azerbaijan,
\newline\indent   B. Vahabzade Street 9, 1141 Baku, Azerbaijan
}
\begin{document}

\begin{abstract}{ We show that blow up of solutions with arbitrary positive initial energy of the Cauchy problem for the abstract wacve eqation of  the form $Pu_{tt}+Au=F(u) \ (*)$ in a Hilbert space, where $P,A$ are positive  linear operators and $F(\cdot)$ is a continuously differentiable gradient operator can be obtained from the result of H.A. Levine on the growth of solutions of the Cauchy problem for (*).  This result is applied to the study of  inital boundary value problems for nonlinear Klein-Gordon equations, generalized Boussinesq equations and  nonlinear plate equations. A result on blow up of solutions with positive initial energy of the initial boundary value problem for wave equation under nonlinear boundary condition is also obtained.
}
\end{abstract}

\keywords{Global non-existence, blow up, differential operator equation, concavity method, positive energy,
 concavity method}

\maketitle

\bigskip

\section{Introduction}
We consider the following problem 
\be\label{doe1}
Pu_{tt}+Au=F(u),
\ee
\be\label{doe1a}
u(0)=u_0, \ \ u_t(0)=u_1
\ee
in a Hilbert space $H$ with the inner product $(\cdot ,\cdot )$ and
the corresponding norm $\left\| \cdot \right\|.$
 We denote by $u$ a vector-function with domain $\left[
0,T\right) $ and range $D,$ where  $D$ is a dense linear subspace of
$H$. Suppose that $P, A$ are symmetric, positive definite, linear operators
defined on $D$, $F(\cdot ): D\rw H$ is a nonlinear
gradient  operator defined on $ D$ with the potential 
$G(u): D\rw R.$
We assume also that
\be\label{FG}
(F(v),v)\geq 2(1+2\al)G(v)- 2R_0, \ \ \forall v\in D
\ee
for some $\al >0, R_0\geq 0$.\\
For the sake of simplicity it is  assumed that $u(t)$ is a strong
solution of \eqref{doe1}, i.e. a solution $u$ for which all terms in \eqref{doe1} are
elements of $L^2(0,T;H)$ and $u(\cdot), u_t(\cdot)\in C(0,T;H).$\\
The idea of the concavity method of H. A. Levine introduced
in \cite{LE1} is based on a construction of some positive functional
$\Psi (t)=\psi (u(t))$, which is defined in terms of the local
solution of the problem (the local solvability of the problem is
therefore required) and proving that the function $\Psi (t)$
satisfies the inequality \eqref{Lin} given in the following
statement:

\begin{lemma}\label{Le}(see \cite{LE1}) Let $\Psi (t)$ be a positive,
twice differentiable function ,which satisfies
the inequality
\begin{equation}\label{Lin}
\Psi''(t)\Psi (t)-(1+\alpha )\left[ \Psi'(t)\right] ^2\geq 0, \ \ t\geq t_0
\end{equation}
with some $\alpha >0.$  If $\Psi (t_0)>0$ and $\Psi'(t_0)>0,$ then there
exists a time $t_1\leq t_0 + \frac{\Psi (0)}{\alpha \Psi'(0)}$ such that
$\Psi (t)\rightarrow +\infty $ as $t\rightarrow t_1^-.$
\end{lemma}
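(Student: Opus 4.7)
The plan is to run Levine's classical concavity trick: transform $\Psi$ into a concave function whose graph is forced to cross zero in finite time, which then forces $\Psi$ to blow up. Concretely, I would introduce
\[
\Phi(t) := \Psi(t)^{-\alpha},
\]
which is well defined and positive as long as $\Psi(t)>0$.

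First I would compute
\[
\Phi'(t) = -\alpha\,\Psi(t)^{-\alpha-1}\Psi'(t),\qquad
\Phi''(t) = -\alpha\,\Psi(t)^{-\alpha-2}\bigl[\Psi''(t)\Psi(t) - (1+\alpha)\Psi'(t)^2\bigr].
\]
By hypothesis \eqref{Lin} the bracket is nonnegative for $t\geq t_0$, so $\Phi''(t)\leq 0$ on $[t_0,\infty)$, i.e.\ $\Phi$ is concave there. From $\Psi(t_0),\Psi'(t_0)>0$ I also have $\Phi(t_0)>0$ and $\Phi'(t_0)<0$.

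Next I would exploit concavity: for any $t\geq t_0$ on the interval of existence of $\Psi$,
\[
\Phi(t)\ \leq\ \Phi(t_0)+\Phi'(t_0)(t-t_0).
\]
The right-hand side is an affine function with negative slope, so it reaches zero at
\[
t_1 \;=\; t_0 - \frac{\Phi(t_0)}{\Phi'(t_0)} \;=\; t_0 + \frac{\Psi(t_0)}{\alpha\,\Psi'(t_0)}.
\]
Hence $\Phi$ must vanish at some time $\leq t_1$. Since $\Psi = \Phi^{-1/\alpha}$, this forces $\Psi(t)\to +\infty$ as $t$ approaches this critical time from the left, giving the asserted blow-up bound.

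Two small points deserve care rather than real difficulty. One is the monotonicity of $\Psi'$: from $\Psi''\Psi\geq (1+\alpha)(\Psi')^2\geq 0$ and $\Psi(t_0)>0$ one gets $\Psi''(t)\geq 0$ near $t_0$, so $\Psi'(t)\geq \Psi'(t_0)>0$ and $\Psi(t)\geq \Psi(t_0)>0$ on $[t_0,t_1)$; this justifies that $\Phi$ stays smooth and the inequality \eqref{Lin} keeps applying on the whole interval where we use it. The other is the blow-up direction: since $\Phi$ is positive, continuous, and its upper affine bound hits $0$ at $t_1$, the limit $\lim_{t\to t_1^-}\Phi(t)=0$, and the identity $\Psi = \Phi^{-1/\alpha}$ then yields $\Psi(t)\to+\infty$. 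No serious obstacle arises; the proof is essentially the standard concavity argument and the bound on $t_1$ is precisely the $x$-intercept of the tangent line at $t_0$.
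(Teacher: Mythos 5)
Your argument is correct and is precisely the classical concavity proof from Levine's cited paper (the present paper only quotes the lemma without reproving it): $\Phi=\Psi^{-\alpha}$ is positive and concave with $\Phi'(t_0)<0$, so it is driven to zero no later than the $x$-intercept of its tangent line at $t_0$, forcing $\Psi\to+\infty$ there. Note that your bound $t_1\le t_0+\Psi(t_0)/(\alpha\Psi'(t_0))$ is the correct one; the statement's $\Psi(0)/(\alpha\Psi'(0))$ is evidently a typo for $\Psi(t_0)/(\alpha\Psi'(t_0))$.
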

The concavty method and its modifications was used in the study of various nonlinear partial differential equations (see e.g. 
\cite{AKS}, \cite{EEE}, \cite{KL},\cite{LE3},\cite{LE4}, \cite{Run},\cite{STR}, \cite{ST2}).\\

There is
a number of papers devoted to the question of blow up of solutions
to the Cauchy problem and initial boundary value problems
for nonlinear wave equations with arbitrary large initial energy.\\
One of  the first results of this type is the result of H. Levine and
G. Todorova \cite{LT},\\
The concavity method and its modifications is employed
 to find sufficient conditions of blow up of solutions to the Cauchy problem and initial boundary value problems 
 for nonlinear Klein - Gordon equation, damped Kirchhoff-type equation, generalized Boussinesq equaton, quasilinear strongky damed wave equations and some other equations
(see, e.g.\cite{BiKa}, \cite{GS}-- \cite{Kut2}, \cite{Mes}, \cite{PP},\cite{SoZh}, \cite{WO},
 \cite{ZMZ} and references therein).

Our aim is to show that blow up of solutions with arbitrary positive initial energy of the problem \eqref{doe1} actually can be established by using the Lemma \ref{Le} and the following theorem 
on growth of solutions of the problem obtained in \cite{LE1}.
\begin{theorem}\label{Levg}  (\cite{LE1}) Suppose that the  $P,A: D\rw H$ are positive symmetric operators, 
 $F(\cdot): D\rw H$ satisfies the condition \eqref{FG} and $u$ is a solution of the problem \eqref{doe1}, \eqref{doe1a}. Suppose that the initial data satisfy the conditions
\be\label{L1}
(u_0,Pu_1)/(u_0,Pu_0)>0,
\ee
\be\label{L2}
\frac12(u_0,Au_0)+\frac12 (Pu_1,u_1)-G(u_0)+\frac{R_0}{(1+2\al)}<\frac12(u_0,Pu_1)^2/(u_0,Pu_0).
\ee
Then
$$
\lim\limits_{t\rw +\infty}(u(t),Pu(t))=+\infty,
$$
if $u(\cdot)$ exists on $(0,+\infty)$.
\end{theorem}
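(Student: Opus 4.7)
The plan is to take $\Psi(t) := (u(t), P u(t))$ and derive a perturbed concavity-type differential inequality from (\ref{doe1}), which I would convert via an integrating-factor trick into a first-order power estimate that forces $\Psi(t) \to +\infty$.

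Direct computation using (\ref{doe1}) and the symmetry of $P, A$ yields $\Psi'(t) = 2(Pu, u_t)$ and $\Psi''(t) = 2(Pu_t, u_t) + 2(F(u), u) - 2(Au, u)$. Testing (\ref{doe1}) against $u_t$ gives energy conservation $E(t) \equiv E(0)$ with $E = \tfrac{1}{2}(Pu_t, u_t) + \tfrac{1}{2}(Au, u) - G(u)$. Using (\ref{FG}) together with $G(u) = \tfrac{1}{2}(Pu_t, u_t) + \tfrac{1}{2}(Au, u) - E(0)$ and then discarding the nonnegative term $4\alpha(Au, u)$, one obtains
\[
\Psi''(t) \geq 4(1+\alpha)(Pu_t, u_t) - 4(1+2\alpha)\,\mathcal{E}, \qquad \mathcal{E} := E(0) + \frac{R_0}{1+2\alpha}.
\]
The Cauchy--Schwarz inequality in the $P$-inner product, $(Pu, u_t)^2 \leq (Pu, u)(Pu_t, u_t) = \Psi\,(Pu_t, u_t)$, then produces the central inequality
\[
\Psi(t)\Psi''(t) - (1+\alpha)[\Psi'(t)]^2 \geq -4(1+2\alpha)\,\mathcal{E}\,\Psi(t).
\]

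If $\mathcal{E} \leq 0$, the central inequality is already the classical concavity inequality, and since $\Psi(0) > 0$ and $\Psi'(0) > 0$ by (\ref{L1}), Lemma \ref{Le} forces $\Psi$ to blow up in finite time, contradicting the standing hypothesis that $u$ exists on all of $(0, +\infty)$. So I may assume $\mathcal{E} > 0$ and focus on this case. To handle the inhomogeneous right-hand side, I would introduce
\[
\mathcal{H}(t) := \frac{[\Psi'(t)]^2}{\Psi(t)^{2(1+\alpha)}} - \frac{8\mathcal{E}}{\Psi(t)^{1+2\alpha}},
\]
whose derivative satisfies
\[
\mathcal{H}'(t) = \frac{2\Psi'(t)}{\Psi(t)^{3+2\alpha}}\Bigl\{\Psi\Psi'' - (1+\alpha)(\Psi')^2 + 4(1+2\alpha)\mathcal{E}\,\Psi\Bigr\} \geq 0
\]
whenever $\Psi'(t) \geq 0$. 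Condition (\ref{L2}) rewrites exactly as $\mathcal{H}(0) > 0$. I would next show that $\Psi'$ never vanishes: otherwise, if $t_1$ is its first positive zero, continuity and monotonicity of $\mathcal{H}$ on $[0, t_1]$ give $\mathcal{H}(t_1) \geq \mathcal{H}(0) > 0$, but the first term of $\mathcal{H}$ vanishes at $t_1$, leaving $\mathcal{H}(t_1) = -8\mathcal{E}\,\Psi(t_1)^{-(1+2\alpha)} < 0$ since $\mathcal{E} > 0$, a contradiction.

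Consequently $\Psi' > 0$ throughout, and $\mathcal{H}(t) \geq \mathcal{H}(0) > 0$ for all $t$ reads $[\Psi'(t)]^2 \geq \mathcal{H}(0)\,\Psi(t)^{2(1+\alpha)} + 8\mathcal{E}\,\Psi(t) \geq \mathcal{H}(0)\,\Psi(t)^{2(1+\alpha)}$. Taking square roots, $\Psi'(t) \geq \sqrt{\mathcal{H}(0)}\,\Psi(t)^{1+\alpha}$; separating variables and integrating yields $\Psi^{-\alpha}(t) \leq \Psi(0)^{-\alpha} - \alpha\sqrt{\mathcal{H}(0)}\,t$, so under the hypothesis of global existence (which keeps $\Psi(t)$ finite for each finite $t$) this is compatible only with $\Psi(t) \to +\infty$ as $t \to +\infty$. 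The principal obstacle is the positive-effective-energy regime $\mathcal{E} > 0$, where the central inequality is not a pure Levine concavity inequality and neither Lemma \ref{Le} applied to $\Psi$ nor naive integration of $\Psi'' \geq -4(1+2\alpha)\mathcal{E}$ suffices; the integrating-factor functional $\mathcal{H}$, combined with the strict openness built into (\ref{L2}) ensuring $\mathcal{H}(0) > 0$, is precisely what converts the perturbed inequality into the usable power-type growth estimate for $\Psi$.
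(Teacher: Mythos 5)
First, a point of reference: the paper does not prove Theorem \ref{Levg} at all --- it is imported from Levine \cite{LE1} --- so there is no internal proof to measure yours against; what follows assesses your argument on its own terms and against the way the paper actually uses the theorem. Your computations check out: the expression for $\Psi''$, conservation of $E$, the elimination of $G(u)$ via the energy identity, discarding $4\alpha(Au,u)\geq 0$, the Cauchy--Schwarz step in the $P$-inner product, the resulting inequality $\Psi\Psi''-(1+\alpha)(\Psi')^2\geq -4(1+2\alpha)\mathcal{E}\,\Psi$, the monotonicity of $\mathcal{H}$ wherever $\Psi'\geq 0$, the identification of \eqref{L2} with $\mathcal{H}(0)>0$, and the argument that $\Psi'$ has no first zero are all correct.

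The problem is the last sentence. From $\Psi'\geq\sqrt{\mathcal{H}(0)}\,\Psi^{1+\alpha}$ you correctly integrate to $\Psi(t)^{-\alpha}\leq\Psi(0)^{-\alpha}-\alpha\sqrt{\mathcal{H}(0)}\,t$, but this is \emph{not} ``compatible only with $\Psi\to+\infty$ as $t\to+\infty$'': since $\Psi^{-\alpha}>0$, the inequality is incompatible with $\Psi$ remaining finite past $t^{*}=\Psi(0)^{-\alpha}/\bigl(\alpha\sqrt{\mathcal{H}(0)}\bigr)$, i.e.\ it forces $(Pu,u)\to+\infty$ in \emph{finite} time, exactly as in your $\mathcal{E}\leq 0$ case. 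Thus on your own account global existence is contradicted in every case, and the stated implication is established only vacuously; you have not produced a genuine growth estimate for global solutions, which is what the statement asserts and what the paper relies on in the proof of Theorem \ref{T1} (where the whole point of the extra coercivity hypothesis \eqref{AP} is to upgrade Levine's growth conclusion to finite-time blow-up). Indeed, if \eqref{L1}--\eqref{L2} alone implied finite-time blow-up of $(Pu,u)$, Theorem \ref{T1} and hypothesis \eqref{AP} would be superfluous --- a strong signal that you should either locate the step where you are using more than the hypotheses warrant, or, if every step truly survives scrutiny, say explicitly that you are proving a strictly stronger finite-time nonexistence statement rather than the growth statement quoted here. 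As written, the final inference is a non sequitur and the proof does not deliver the theorem in the intended (non-vacuous) sense.
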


\section{Blow Up of Solutions to Abstract Wave Equations.}
In this section we find sufficient conditions for finite-time blow up of solutions to the problem \eqref{doe1}, \eqref{doe1a}  when the initial energy may take arbitrary positive values.

\begin{theorem}\label{T1}
Suppose that the operators $P,A$ and $F$ satisfy all the
conditions of Theorem \ref{Levg}  and suppose that there exists $a_0>0$ such that
\be\label{AP}
(Av,v)\geq a_0 (Pv,v), \ \ \forall v\in D.
\ee
Then there exists $t_1>0$ such that
\be\label{blow1}
\lim\limits_{t\rw t_1^-}(Pu(t),u(t))=+\infty.
\ee
\end{theorem}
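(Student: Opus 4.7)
The strategy is to argue by contradiction: assume the solution $u(\cdot)$ exists on all of $(0,+\infty)$ and then use Lemma~\ref{Le} to force finite-time blow-up of
\[
\Psi(t) := (Pu(t),u(t)),
\]
contradicting global existence and yielding \eqref{blow1}. Note that $\Psi(t)>0$ as long as the solution exists, since $P$ is positive definite and $u_0\neq 0$ (by \eqref{L1}).

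The computational heart is to establish Levine's concavity inequality \eqref{Lin} for $\Psi$, at least from some time $t_0$ onward. Differentiating twice and using the equation \eqref{doe1} together with the symmetry of $P,A$ gives $\Psi'(t)=2(Pu,u_t)$ and
\[
\Psi''(t)=2(Pu_t,u_t)-2(Au,u)+2(F(u),u).
\]
Multiplying \eqref{doe1} by $u_t$ and using that $F$ is the gradient of $G$ yields conservation of the energy $E(t):=\tfrac12(Pu_t,u_t)+\tfrac12(Au,u)-G(u)=E(0)$. Substituting the resulting expression for $G(u)$ into the assumption \eqref{FG} produces a lower bound of the form
\[
\Psi''(t)\geq (4+4\alpha)(Pu_t,u_t)+4\alpha(Au,u)-4(1+2\alpha)E(0)-4R_0.
\]
The Cauchy--Schwarz inequality in the inner product $(P\,\cdot\,,\cdot)$ gives $(\Psi'(t))^2\leq 4\Psi(t)(Pu_t,u_t)$, so the term $(4+4\alpha)\Psi(t)(Pu_t,u_t)$ exactly absorbs $(1+\alpha)(\Psi'(t))^2$ in the concavity expression. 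Invoking \eqref{AP} to replace $(Au,u)$ by $a_0\Psi(t)$ then leaves
\[
\Psi''(t)\Psi(t)-(1+\alpha)(\Psi'(t))^2 \geq \Psi(t)\bigl[4\alpha a_0\,\Psi(t)-C_0\bigr],
\]
with $C_0:=4(1+2\alpha)E(0)+4R_0$ depending only on the initial data.

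The remaining task is to locate a time $t_0$ at which simultaneously $\Psi(t_0)\geq C_0/(4\alpha a_0)$ and $\Psi'(t_0)>0$. This is where Theorem~\ref{Levg} is used: under \eqref{L1}--\eqref{L2}, any globally defined solution must satisfy $\Psi(t)\to+\infty$. Hence $\Psi$ eventually exceeds the threshold $C_0/(4\alpha a_0)$, and since a $C^1$ function tending to $+\infty$ cannot be eventually non-increasing, one can pick $t_0$ past the threshold with $\Psi'(t_0)>0$ as well. Applying Lemma~\ref{Le} to $\Psi$ on $[t_0,+\infty)$ gives a finite $t_1$ with $\Psi(t)\to+\infty$ as $t\to t_1^-$, contradicting the hypothesis of global existence. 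The main point to get right is the interplay between Theorem~\ref{Levg} and Lemma~\ref{Le}: by itself Theorem~\ref{Levg} only gives asymptotic growth, while Lemma~\ref{Le} requires both the differential inequality and positive initial derivative; condition \eqref{AP} is precisely the ingredient that converts the eventual largeness of $\Psi$ into a superlinear term $4\alpha a_0\Psi^2$ dominating the constant $C_0\Psi$, closing the loop.
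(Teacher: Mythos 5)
Your proposal is correct and follows essentially the same route as the paper: assume global existence, invoke Theorem \ref{Levg} to get $\Psi(t)=(Pu,u)\to+\infty$, derive the lower bound $\Psi''\Psi-(1+\alpha)(\Psi')^2\geq\Psi\,[\,4\alpha a_0\Psi-4(1+2\alpha)E(0)-4R_0\,]$ via energy conservation, Cauchy--Schwarz in the $P$-inner product, and \eqref{AP}, then choose $t_0$ past the threshold with $\Psi'(t_0)>0$ and apply Lemma \ref{Le}. The only cosmetic difference is how $\Psi'(t_0)>0$ is located (you use that a function tending to $+\infty$ cannot be eventually non-increasing; the paper notes $\Psi''\geq\delta>0$ past $t_*$), and both are valid.
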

\begin{proof}
Assume that all solutions of the problem \eqref{doe1}, \eqref{doe1a} are global solutions, i.e. they are defined for all $t\in (0,+\infty).$
Thanks to the Theorem \ref{Levg} if $u$ is a solution of the problem \eqref{doe1}, \eqref{doe1a}, then
\be\label{inf}
\Psi(t):=(Pu(t),u(t)) \rw +\infty  \ \ \mbox{as} \ \ t\rw \infty.
\ee
On the other hand 
$$
\Psi^\prime(t)=2(Pu_t(t),u(t)), 
$$
$$
\Psi^{\prime\prime}(t)=2(Pu_t(t),u_t(t))+2(Pu_{tt}(t),u(t)).
$$
Employing the equation \eqref{doe1} and the condition \eqref{FG} we get
\begin{multline*}
\Psi^{\prime\prime}(t)=2(Pu_t(t),u_t(t))-2(Au(t),u(t))+2(F(u(t),u(t))\\
\geq 2(Pu_t(t),u_t(t))-2(Au(t),u(t))+4(1+2\al)G(u(t)-4R_0.
\end{multline*}
 As usual, we define the energy as
\be\label{En}
E(t) := \frac12 (Pu_t(t),u(t))+\frac12 (Au(t),u(t))-G(u(t)),
\ee
and find that 
\be\label{energy}
E(t)=E(0)=\frac12(u_0,Au_0)+\frac12 (Pu_1,u_1)-G(u_0), \ t>0.
\ee
By using the energy equality \eqref{energy} we obtain from the last inequality that
\begin{multline}\label{inf1}
\Psi^{\prime\prime}(t)\geq 4(1+2\al)\left[ -\frac12 (Pu_t(t),u_t(t))-\frac12 (Au(t),u(t))+ G(u(t))\right]\\+
4(\al +1) (Pu_t(t),u_t(t))+4\al (Au(t),u(t)) -4R_0\\ \geq -4(1+2\al)E(0)-4R_0+4(\al +1) (Pu_t(t),u_t(t))+4\al (Au(t),u(t)) .
\end{multline}
Thus, by using the Cauchy - Schwarz inequality and the condition \eqref{AP} we obtain
\begin{multline}\label{inf2}
\Psi''(t)\Psi (t)-(1+\alpha )\left[ \Psi'(t)\right] ^2
\geq -\left( 4(1+2\al)E(0)+4R_0\right)\Psi(t)+4\al a_0\Psi^2(t) \\
+4(\al +1)\left[ (Pu_t(t),u_t(t))(Pu(t),u(t)) -(Pu_t(t),u(t))^2\right]\\
\geq \left[\al a_0 \Psi(t)-4(1+2\al)E(0)-4R_0\right]\Psi(t).
\end{multline}
Thanks to the Theorem \ref{Levg} the function $\Psi(t)$ tends to $+\infty$ as $t\rw +\infty$. Therefore, there exists $t_*>0$ such that
$$
\al a_0 \Psi(t)-4(1+2\al)E(0)-4R_0 \geq \delta > 0, \ \ \forall t\geq t_*.
$$
Hence, \eqref{inf2} implies that
$$
\Psi''(t)\Psi (t)-(1+\alpha )\left[ \Psi'(t)\right] ^2
\geq 0, \ \ \forall t\geq t_*.
$$
Moreover, in view of the assumption on $t_*$ by using \eqref{AP} we easily deduce from \eqref{inf1} that
$$
\Psi''(t)\geq \delta>0 \ \ \forall t \geq t_*.
$$
Consequently, there exists some $t_0\geq t_*$ such that $\Psi'(t_0)>0$.
Now we can apply the Lemma \ref{Le} and deduce that 
$$
(Pu(t),u(t))\rw +\infty, \ \ as t\rw t_1^-.
$$
\end{proof}

\section{Examples of Nonlinear Wave equations}
\noindent {\bf  1. Nonlinear Klein-Gordon Equation}
Let $u$ be a local strong solution to the Cauchy problem
\be\label{kg1}
\begin{cases}
\pt^2 u-\Dx u +m^2u= |u|^2u , \ x \in \R^3, t>0,\\
u(x,0)=u_0(x),  \ \pt u(x,0)=u_1(x),
\end{cases}
\ee where $m>0$ is a  given number, $u_0\in H^1(\R^n), u_1\in
L^2(\R^n)$ are given compactly supported functions.\\
The equation can be written in the form \eqref{doe1} with $P=I, \ A=
-\Dx +m^2I$ and $F(u)=|u|^2u$.
It follows from Theorem 
\ref{T1} that if \be\label{kgc} 
(u_0,u_1)>\left[\|u_1\|^2+\|\Nx
u_{0}\|^2+m^2\|u_0\|^2-\frac12\int_{\R}|u_0(x)|^{4}dx\right]^\frac12 \|u_0\| ,\ee
then the solution of the problem \eqref{kg1} blows up in a finite
time. If $u_0$ is a smooth nonnegative, nontrivial compactly
supported function then for    $u_1=\frac1{\sqrt{2}}u_0^{2}$ the
initial energy is
$$
E(0)=\frac12\|\Nx u_{0}\|^2+\frac{m^2}2\|u_0\|^2
$$
and the condition \eqref{kgc} takes the form \be\label{kgc1}
\int_{\R}|u_0(x)|^{3}> \sqrt{2} \left[ \|\Nx u_{0}\|^2+m^2\|u_0\|^2\right]^\frac12 \|u_0\|. \ee It is clear
that there is a wide class of functions $u_0$ for which the energy
takes any large value and the condition \eqref{kgc1} holds true.
\begin{remark} The Theorem \eqref{T1} holds true also for solutions of  the initial boundary
value problem for the nonlinear wave  equation under the homogeneous Dirichlet boundary  condition:
\be\label{w1}
\begin{cases}
\pt^2u-\Dx u +m^2u =|u|^pu, \ x \in \Om, t>0,\\
u(x,0)=u_0(x), \pt u(x,0)=u_1(x), \ x \in \Om, \\
u(x,t)=0, \ x \in
\p \Om, t>0,
\end{cases}
\ee
where $p$ is an arbitrary positive number if $n=1,2$ and $p\in (0,\frac{2}{n-2}]$ if $n\geq 3$.\\
Let us note that this result easily follows from the results of
 T. Cazenave obtained in \cite{CA} for solutions of the problem \eqref{w1} and the Theorem \ref{Levg} of H. A. Levine.\\
Indeed, T. Cazenave proved that each solution of the problem \eqref{w1} either  blows up in a finite time or is uniformly bounded.\\
Thus, if the functions $u_0,u_1$  satisfy the conditions of Theorem \ref{Levg}, that is
$$
(u_0,u_1)>\left[\|u_1\|^2+\|\Nx
u_{0}\|^2+m^2\|u_0\|^2-\frac{2}{p+2}\int_{\R}|u_0(x)|^{p+2}dx\right]^\frac12 \|u_0\| ,
$$
then the corresponding local solution of the
problem \eqref{w1} can not be continued on the whole interval $[0,\infty)$, i.e. it must blow up in a finite time.
\end{remark}

\begin{example}\label{Bsq} {\bf Generalized Boussinesq Equation}

Similarly we can find sufficient conditions for blow up of solutions with arbitrary positive initial energy for the generalized Boussinesq equation 
\be\label{G.Bsq.}
\pt^2 u-a\Delta u_{tt}-\Delta u +\nu \Delta^2 u+\Delta f(u)=0, \ x\in \Omega, t>0
\ee
under the homogeneous Dirichlet boundary conditions 
$$
u=\Delta u=0, \ x\in \p \Om, t>0,
$$ 
where $f(u)=|u|^mu+P_{m-1} (u), \ m\geq1$ is a given integer, $a\geq 0,\nu>0$ are given numbers,
$\Om\in \R^n$ is a bounded domain and $P_{m-1} (u)$ is a polynomial of order $\leq m-1.$
Applying $(-\Delta)^{-1}$ to \eqref{G.Bsq.},
where $-\Delta$ is the Laplace operator under Dirichlet boundary conditions,
we obtain an equation of the form \eqref{doe1} with
$P= (-\Delta)^{-1} + a I$, $A= I - \nu \Delta$ and $F$ replaced by $f$.
It is easy to see that there is $R_0\geq0$ such that $f$ satisfies
\eqref{FG} with $G(u)=\int_\Om\int_0^u f(s)dsdx$  and $\alpha= \frac{m}{4}$.
We consider initial data $(u_0,u_1)\in H^1_0(\Om)\times L^2(\Om)$.
Since $\Om$ is bounded, the Poincare inequality assures that the assumption \eqref{AP} is verified.
Hence, the conclusion of Theorem \ref{T1} holds provided that the assumptions of Theorem \ref{Levg} are fullfilled,
that is $u_0,u_1$ satisfy 
$$
(Pu_0,u_1)>0,
$$
$$
\frac12 \frac{(Pu_0,u_1)^2}{(Pu_0,u_0)}> E(u_0,u_1) + \frac{R_0}{1 + \frac{m}{2}} ,
$$
where
$$
E(u_0,u_1)= \frac12 (Pu_1,u_1)+ \frac12(Au_0,u_0) - \int_\Om \int_0^{u_0} f(s)dsdx.
$$
Let us prove that for any given number $K^2>0$ and any pair of functions\\
$[\hat{u}_0, \hat{u}_1 ]\in H^1_0(\Om)\times L^2(\Om)$ with
$$(P\hat{u}_0, \hat{u}_1 ) = \theta>0,  (P\hat{u}_0,\hat{u}_0)=(P\hat{u}_1,\hat{u}_1)=1,$$
there are uncountably infinitely many data of the form $u_i=c_i \hat{u}_i$, $c_i>0$, $i=0,1$,
such that $E(u_0,u_1)=K^2$ and the above conditions are satisified.
Here, note that necessarily $0<\theta\leq 1$.
Observe that in this case it is enough to verify only the latter of the two conditions above.
Rewriting this condition for the initial data of the form described above we find
$$
\frac12 c_1^2 \theta^2 >  E(c_0\hat{u}_0, c_1\hat{u}_1) + \frac{2R_0}{m+2} .
$$
Thus, the question is, given $K^2>0$, can we find $c_0,c_1>0$ so that this inequality is satisfied together
with the equality $ E(c_0\hat{u}_0, c_1\hat{u}_1)=K^2$.
So, let $K^2>0$ be given and 
fix any $c_1 > \theta^{-1}\left[2K^2 + \frac{4R_0}{m+2} \right]^\frac12.$
Note that, if for this fixed value of $c_1$ we can find $c_0>0$ such that $ E(c_0\hat{u}_0, c_1\hat{u}_1)=K^2$,
then the inequality condition above is automatically satisfied, and we are done.
It is easy to see that it is possible for any such $c_1$. Indeed, since $\theta \leq 1$, we have
$$
\frac12 c_1^2 > K^2,
$$
and consequently the continuous function
$$
H(c_0):= E(c_0\hat{u}_0, c_1\hat{u}_1) - K^2 = \frac12 c_1^2 - K^2 + \frac12 c_0^2(A\hat{u}_0,\hat{u}_0) - \int_\Om \int_0^{c_0\hat{u}_0} f(s)dsdx
$$
has the property
$$
\lim\limits_{c_0\rightarrow 0^+} H(c_0) = \frac12 c_1^2 - K^2 > 0.
$$
Moreover, by the structure of $f$ we have 
$$
\lim\limits_{c_0\rightarrow \infty } H(c_0) = - \infty.
$$
Due to  the intermediate value theorem we deduce that there is $c_0>0$ such that $H(c_0)=0$, and this finishes the proof.
\end{example}

\begin{example}\label{NPE} {\bf Nonlinear Plate  Equations}
It is clear that we can apply Theorem \ref{T1} to find sufficient conditions of blow up of solutions to intial boundary value problems  for the nonlinear plate equations of the form
$$
u_{tt}+\Delta^2u +\left(a_1+b_1 \int_\Om u_{x_1}^2dx\right)u_{x_1x_1}+\left(a_2+b_2 \int_\Om u_{x_2}^2dx\right)u_{x_2x_2}=0,  \ x\in \Om, t>0,
$$
and 
$$
u_{tt} +\Delta^2 u=f(u),  \ x \in \Om, t>0,
$$
under the boundary conditions
$$
u=\Delta u=0,  \ x\in \p \Om,
$$
where $f(\cdot): \R\rw \R$ is a continuous function which satisfies the condition
\be\label{Fc}
f(s)s-2(1+2\al) F(s)\geq -r_0,  \ \forall s\in \R, 
\ee
$r_0\geq 0, a_1,a_2\in \R , b_1>0,b_2>0$ are given numbers, $\ F(s):=\int_0^sf(\tau)d\tau$ and $\Om \subset \R^2$ is a bounded domain with sufficiently smooth boundary $\p \Om.$
\end{example}
\begin{remark}\label{R1}
Appliying Theorem \ref{T1}  we can obtain similar results on blow up of solutions to 
\begin{itemize}
\item initial boundary value problem for 
improved  Boussinesq equation
$$
u_{tt}-\Delta u_{tt} -\Delta u+\Delta^2 u+\Delta ^2 u_{tt} +\Delta (f(u))=0, 
$$
 where $f(\cdot)$ is a smooth function which satisfies  \eqref{Fc},\
\item  Cauchy problem and initial boundary value for system of nonlinear Klein-Gordon equation
$$
\begin{cases}
u_{tt}-\Delta u +m^2u =uv^2 +h_1(x), 
\\
v_{tt}-\Delta v +\mu^2v =vu^2 +h_2(x),
\end{cases}
$$
where $h_1,h_2\in L^2(\R^3)$ are given functions.
\end{itemize}
\end{remark}
\section{Second Order Wave Equation Under Nonlinear Boundary Conditions}
In this section we consider the following problem
\be\label{nb1}u_{tt}-\Delta u+bu=0, \ x\in \Om, t>0,
\ee
\be\label{nb2}
u(x,0)=u_0(x), \ \ u_t(x,0)=u_1(x), \ \ x\in \Om,
\ee
\be\label{nb3}
\frac{\p u}{\p n}=f(u), \ \ x\in \p \Om, t>0,
\ee
where $\Om\subset \R^N$ is a bounded domain with sufficiently smooth boundary, $b>0$ is a given number, $\vec{n} $ denotes the outward directed normal to $\p \Om$ and $f(\cdot):\R\rw\R$ is nonlinear term that satisfies the condition \eqref{Fc}.\\
The energy equality in this case has the form
\be\label{NBE}
E(t):=\frac12\|u_t(t)\|^2_{L^2(\Om)}+\frac12\|\nabla u(t)\|^2_{L^2(\Om)}+\frac b2\|u(t)\|^2_{L^2(\Om)} -\int\limits_{\p \Om}F(u(x,t))d\sigma=E_0,
\ee
where 
$$
E_0=\frac12\|u_1\|^2_{L^2(\Om)}+\frac12\|\nabla u_0\|^2_{L^2(\Om)}+\frac b2\|u_0\|^2_{L^2(\Om)} -\int\limits_{\p \Om}F(u_0(x))d\sigma.
$$
Set
$$
\Psi(t)= \|u(t)\|^2_{L^2(\Om)}, \ \ t\geq 0,
$$
where $u(t)$ is a solution of the problem \eqref{nb1}-\eqref{nb3}. Then employing the equation \eqref{nb1}, the boundary condition \eqref{nb3} and the condition \eqref{Fc} we obtain
$$
\Psi^{\prime\prime}(t)\geq 2\|u_t(t)\|^2_{L^2(\Om)}-2\|\nabla u(t)\|^2_{L^2(\Om)}-2b \|u(t)\|^2_{L^2(\Om)}
+ 4(1+2\al)\int\limits_{\p \Om}F(u)d\sigma-2R_0,
$$
where $R_0= |\p \Om|r_0$.
Utilizing the energy equality \eqref{NBE} from the last inequality we obtain that
\begin{multline}\label{nb5}
\Psi^{\prime\prime}(t)\geq - 4(1+2\al)E_0 - 2R_0+4(1+\al)\|u_t(t)\|^2_{L^2(\Om)}\\
+4\al\|\nabla u(t)\|^2_{L^2(\Om)}+4\al b\|u(t)\|^2_{L^2(\Om)}.
\end{multline}
Employing  \eqref{nb5}, similar to the proof of the Theorem \ref{Levg} we can show that if
\be\label{nbb1}
\frac{(u_0,u_1)_{L^2(\Om)}}{\|u_0\|^2_{L^2(\Om)}}>2E_0+\frac{R_0}{1+2\al}>0,
\ee
then 
\be\label{nb6}
\Psi(t)=\|u(t)\|^2_{L^2(\Om)}\rw +\infty \ \ \mbox{as} \ \ t\rw +\infty.
\ee
Finally arguing as in the proof of the Theorem \ref{T1}  we get the inequality
$$
\Psi^{\prime\prime}(t)\Psi(t)-(1+\al)\left[\Psi^\prime(t)\right]^2 \geq \left(4\al b \Psi(t) -4(1+2\al)E_0-2R_0\right)\Psi(t).
$$
Thanks to the last inequality and the Lemma \ref{Le} we proved the following
\begin{theorem} \label{Tnb} If the conditions \eqref{nbb1} are satisfied, then the interval of existence $[0,T)$ of solution to the problem \eqref{nb1}-\eqref{nb3} is finite. Moreover
$$
\|u(t)\|^2_{L^2(\Om)}\rw +\infty \ \ \mbox{as} \ \ t\rw T^-.
$$
\end{theorem}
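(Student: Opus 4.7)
The plan is to follow the blueprint of Theorem \ref{T1} with $\Psi(t):=\|u(t)\|_{L^2(\Om)}^2$ as the concavity functional; the only new feature is that the nonlinearity sits on the boundary rather than the interior. I argue by contradiction, assuming the solution is global on $[0,\infty)$ and aiming to force finite-time blow-up of $\Psi$ via Lemma \ref{Le}.

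First I would compute $\Psi'(t)=2(u_t,u)_{L^2(\Om)}$ and $\Psi''(t)=2\|u_t\|_{L^2(\Om)}^2+2(u_{tt},u)_{L^2(\Om)}$, substitute the equation \eqref{nb1}, integrate by parts using the boundary condition \eqref{nb3}, and apply \eqref{Fc} to the resulting boundary integral $\int_{\p\Om}f(u)u\,d\sigma$ (with $R_0=|\p\Om|r_0$); eliminating $\int_{\p\Om}F(u)\,d\sigma$ by the conservation law \eqref{NBE} yields inequality \eqref{nb5}. The intermediate step is the growth claim that \eqref{nbb1} forces $\Psi(t)\to+\infty$ as $t\to+\infty$; this is the boundary analogue of Theorem \ref{Levg}. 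I would prove it by retaining only the $4\al b\,\Psi$ term on the right side of \eqref{nb5}, combining with Cauchy--Schwarz $[\Psi'(t)]^2\leq 4\Psi(t)\|u_t(t)\|_{L^2(\Om)}^2$, and using the strict positivity of $(u_0,u_1)_{L^2(\Om)}/\|u_0\|_{L^2(\Om)}^2$ supplied by \eqref{nbb1} to propagate the sign of $\Psi'$ and show that $\Psi'/\Psi$ stays bounded below by a positive constant, whence $\Psi$ grows at least exponentially.

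Once $\Psi\to+\infty$ is in hand, multiplying \eqref{nb5} by $\Psi(t)$, discarding the nonnegative contribution $4\al\|\nabla u\|_{L^2(\Om)}^2\,\Psi(t)$, and invoking the same Cauchy--Schwarz bound yields
$$\Psi''(t)\Psi(t)-(1+\al)[\Psi'(t)]^2\geq \bigl(4\al b\,\Psi(t)-4(1+2\al)E_0-2R_0\bigr)\Psi(t).$$
Because $\Psi\to+\infty$, the right-hand side is eventually bounded below by $\delta\,\Psi(t)>0$ on some interval $[t_*,\infty)$, giving \eqref{Lin}; on the same interval the bound $\Psi''(t)\geq\delta$ forces $\Psi'(t_0)>0$ at some $t_0\geq t_*$. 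Lemma \ref{Le} then produces finite-time blow-up of $\Psi$, contradicting the assumption of global existence.

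I expect the growth step to be the main obstacle. The author defers it to ``similar to the proof of Theorem \ref{Levg}'', but in \cite{LE1} the nonlinear potential is an interior functional, while here $G(u)=\int_{\p\Om}F(u)\,d\sigma$ is a boundary functional and the abstract operator $P$ collapses to the identity. The nontrivial verification is that the quantitative bound in \eqref{nbb1} is precisely the specialization of \eqref{L1}--\eqref{L2} to this setting, and that it is strong enough to close the Levine bootstrap in a regime where the energy is fed through the boundary, so that positivity of $(u(t),u_t(t))_{L^2(\Om)}$ persists long enough for the exponential-growth estimate to take hold.
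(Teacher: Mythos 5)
Your proposal follows the paper's own argument essentially step for step: the same functional $\Psi(t)=\|u(t)\|_{L^2(\Om)}^2$, the same derivation of \eqref{nb5} from \eqref{Fc} and the energy identity \eqref{NBE}, the same intermediate growth claim modeled on Theorem \ref{Levg}, and the same concavity inequality fed into Lemma \ref{Le}. The paper leaves the growth step and the final concavity computation as references to earlier proofs, and the details you sketch for filling them in are consistent with what those references supply.
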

\begin{remark} Theorem \ref{Tnb} holds true also for the equation
$$
u_{tt}-\Delta u=0, \ \ x\in \Omega, t>0
$$
when a nonlinear boundary condition of the form\ 
$$
u(x,t)=0, \ \ x \in \Gamma_1, \ \ \frac{\p u}{\p n}=f(u), \ \ x\in \Gamma_2, t>0,
$$
where $\Gamma_1\cup \Gamma_2=\p \Omega, \ mes(\Gamma_1)\neq 0.$
\end{remark}


\begin{thebibliography}{9}
\bibitem{AKS} A. B. Alshin, M. O. Korpusov,  A.G. Sveshnikov,
{\it Blow up in Nonlinear Sobolev Type Equations},(De Gruyter Series
in Nonlinear Analysis and Applications), 2011.
\bibitem{BiKa}  B. A. Bilgin,  V.K. Kalantarov, Blow up of solutions to the initial boundary value problem for quasilinear strongly damped wave equations. J. Math. Anal. Appl. 403 (2013), no. 1, 89–94.
\bibitem{CA} T. Cazenave, Uniform estimates for solutions of nonlinear Klein-Gordon equations.
 J. Funct. Anal. {\bf 60} (1985), no. 1, 36--55.
\bibitem{EEE}  Erbay, H. A.; Erbay, S.; Erkip, A. Thresholds for global existence and blow-up in a general class of doubly dispersive nonlocal wave equations. Nonlinear Anal. {\bf 95} (2014), 313–322. 
 \bibitem{GS}  F. Gazzola,  M. Squassina, Global solutions and finite time blow up for damped semilinear wave equations.
 Ann. Inst. H. Poincare Anal. Non Lineaire 23 (2006), no. 2, 185--207.
\bibitem{KL} V.K Kalantarov, O.A.  Ladyzhenskaya  ,
 The occurrence of collapse for quasilinear equations of parabolic
and hyperbolic type, J.
Soviet Math. ,10, 53- 70.(1978).
\bibitem{Kor1}  M. O. Korpusov, Blow-up of the solution of a nonlinear system of equations with positive energy, Theoret. and Math. Phys. 171 (2012), no. 3, 725–738.
\bibitem{Kor2}  M. O. Korpusov,  On the blow-up of solutions of a dissipative wave equation of Kirchhoff type with a source and positive energy, 
     Sib. Math. J. 53 (2012), no. 4, 702–-717
\bibitem{Kut1} N. Kutev, N. Kolkovska, M. Dimova, Nonexistence of global solutions to new ordinary differential inequality and
applications to nonlinear dispersive equations, Math. Meth. in the Appl. Sci.(2015)
\bibitem{Kut2} N. Kutev, N. Kolkovska, M. Dimova, Sign-preserving functionals and blow-up to Klein–Gordon equation
with arbitrary high energy, Appl. Anal. (2015)
\bibitem{LE1}H. A. Levine,  Instability and nonexistence of global solutions
to nonlinear wave equations of the form $Pu_{tt}=-Au+F(u),$ Trans. Am. Math.
Soc. {\bf 192} (1974) 1-21.
\bibitem{LE3} H. A. Levine   Some additional remarks on the nonexistence of
global solutions to nonlinear wave equations, SIAM J. Math. Anal. {\bf 5} (1974)
138-146.
\bibitem{LE4} H. A.  Levine,  A note on a nonexistence theorem for some
nonlinear wave equations, SIAM J. Math. Anal. {\bf 5} (1974) 644-648.
\bibitem{LEP} Levine H.A., Paine L.E., {\it Nonexistence theorems for the heat
equations with nonlinear boundary conditions and for porous medium equation
backward in time,} J. Diff. Eq., {\bf 16} (1974) 319-334.
\bibitem{LT} H. A. Levine and G. Todorova,  Blow up of solutions of the Cauchy problem
 for a wave equation with nonlinear damping and source terms and positive initial energy.
  Proc. Amer. Math. Soc. {\bf 129} (2001), no. 3, 793--805
\bibitem{Mes}  S. A. Messaoudi, B.  Said-Houari,  Global nonexistence of positive initial-energy solutions of a system of nonlinear viscoelastic wave equations with damping and source terms. J. Math. Anal. Appl. {\bf 365} (2010), no. 1, 277-–287.
\bibitem{PP}  E. Pi\c{s}kin, N. Polat, Existence, global nonexistence, and asymptotic behavior of solutions for the Cauchy problem of a multidimensional generalized damped Boussinesq-type equation. Turkish J. Math. 38 (2014), no. 4, 706–727. 
\bibitem{Qin} Y. Qin, {\it Nonlinear Parabolic-Hyperbolic Coupled Systems and Their Attractors}, 
Springer, 2008.
\bibitem{SoZh} H. Song, Ch. Zhong, Blow up of the nonclassical diffusion equation. J. Math. Phys. 50 (2009), no. 4, 042702, 5 pp.
\bibitem{STR} B. Straughan,  Further global nonexistence theorems for
abstract nonlinear wave equations, Proc. Amer. Math. Soc., {\bf 48} (1975) 381-390.
\bibitem{ST2} B. Straughan. 
{\it Explosive Instabilities in Mechanics}. Springer (1998).
\bibitem{WO} Y. Wang, A Sufficient condition for finite time blow up of   the nonlinear Klein - Gordon
equations with arbitrary positive initial energy,Proceedings of the AMS
{\bf 136} (2008), no. 10, pp 3477--3482.
\bibitem{Run} Runzhang, Xu; Yanbing, Yang; Bowei, Liu; Jihong, Shen; Shaobin, Huang, Global existence and blowup of solutions for the multidimensional sixth-order "good'' Boussinesq equation. Z. Angew. Math. Phys. {\bf 66} (2015), no. 3, 955–-976. 
\bibitem{ZMZ} R. Zeng, Ch. Mu, and Sh. Zhou,
A blow-up result for Kirchhoff-type equations with high energy.
Math. Methods Appl. Sci. {\bf 34} (2011), no. 4, 479--486.
\end{thebibliography}
\end{document}